\newcommand{\csim}{\stackrel{\mathrm{cyc}}{\thicksim}}
\newcommand{\NCSOStools}{\href{http://ncsostools.fis.unm.si/}{{\tt NCSOStools}} }
\newcommand{\NCSOStoolz}{\href{http://ncsostools.fis.unm.si/}{{\tt NCSOStools}}}
\newcommand{\NCAlgebra}{\href{http://www.math.ucsd.edu/~ncalg/}{{\tt NCAlgebra}} }
\DeclareMathOperator{\Tr}{Tr}
\def\bes{\begin{equation*} }
\def\ees{\end{equation*} }
\def\sym{\operatorname{Sym}}
\def\beq{\begin{equation}}
\def\eeq{\end{equation}}
\def\ben{\begin{enumerate}}
\def\een{\end{enumerate}}
\newtheorem{theorem}{Theorem}[section]
\newtheorem{lemma}[theorem]{Lemma}
\newtheorem{thm}[theorem]{Theorem}
\newtheorem{prop}[theorem]{Proposition}
\theoremstyle{definition}
\newtheorem{exa}[theorem]{Example}
\newtheorem{defn}[theorem]{Definition}
\newcommand{\mycontentsbox}{%
{\centerline{NOT FOR PUBLICATION}
\small\tableofcontents}}
\def\enddoc@text{\ifx\@empty\@translators \else\@settranslators\fi
\ifx\@empty\addresses \else\@setaddresses\fi
\newpage\mycontentsbox}
\def\R{\mathbb{R}}
\def\RR{\mathbb{R}}
\def\ax{\langle x \rangle}
\def\N{\mathbb{N}}
\def\bo{\mathbf{1}}
\numberwithin{equation}{section}
\begin{document}

\setcounter{tocdepth}{3}
\contentsmargin{2.55em} 
\dottedcontents{section}[3.8em]{}{2.3em}{.4pc} 
\dottedcontents{subsection}[6.1em]{}{3.2em}{.4pc}
\dottedcontents{subsubsection}[8.4em]{}{4.1em}{.4pc}

\title{On trace-convex noncommutative polynomials}
\author[Igor Klep]{Igor Klep${}^{1}$}
\address{Igor Klep, Department of Mathematics, 
The University of Auckland, New Zealand}
\email{igor.klep@auckland.ac.nz}
\thanks{${}^1$Supported by the Marsden Fund Council of the Royal Society of New Zealand. Partially supported by the Slovenian Research Agency grants P1-0222, L1-4292 and L1-6722. Part of this research was done while the author was on leave from the University of Maribor.}

\author[Scott McCullough]{Scott A. McCullough${}^2$}
\address{Scott McCullough, Department of Mathematics\\
  University of Florida\\ Gainesville 
   }
   \email{sam@math.ufl.edu}
\thanks{${}^2$Research supported by the NSF grants DMS 1101137 and 1361501}

\author[Christopher Nelson]{Christopher S. Nelson${}^3$}
\address{Christopher Nelson, Department of Mathematics\\
  University of California \\
  San Diego}
\email{csnelson@math.ucsd.edu}
\thanks{${}^3$Partly supported by the National Science Foundation, DMS 1201498}

\subjclass[2010]{Primary: 13J30, 14A22, 46L07, Secondary: 16S10, 14P10, 47Lxx, 16Z05}

\keywords{free real algebraic geometry, free convexity, trace, sum of squares, noncommutative polynomial, commutator, free analysis}

\dedicatory{Dedicated to Bill Helton on the occasion of his ?? birthday}

\begin{abstract}
To each continuous function $f:\R\to\R$ there is an associated trace function on $n\times n$ real symmetric
matrices $\Tr f$. The classical Klein lemma states that $f$ is convex if and only if $\Tr f$ is 
convex. In this note we present an algebraic strengthening of this lemma for univariate polynomials $f$: 
$\Tr f$ is convex if and only if the noncommutative second directional derivative of $f$ is a sum of 
hermitian squares and commutators in the free algebra. We also give a localized version of this result.
\end{abstract}

\maketitle

\def\bS{\mathbb S}

\section{Introduction}

Trace-convexity is a notion frequently used in free probability
and free analysis \cite{SV06,KV}, where e.g.~the 
trace of certain potentials is  assumed to be convex;
see \cite{Gui,Car,GS} or the references therein.
One of the basic technical tools of the trade is the so-called Klein lemma saying that 
a continuous  function $f:\R\to\R$ is convex if and only if the associated trace
function $\Tr f: \bS_n \to \R$ is convex for all $n\in\N$.
(Here $\bS_n$ denotes the set of all real symmetric $n\times n$ matrices.)
We call such a function {\bf trace-convex}.

In this note we  establish an algebraic version of Klein's lemma.
That is, we give an algebraic certificate using sums of squares and commutators
in the free algebra on two letters whose existence is equivalent to trace-convexity of a polynomial.
 Indeed, we show that trace-convexity of a univariate real polynomial $p$ is equivalent to its second noncommutative
  derivative $p^{\prime\prime}(x)[h]$ being a sum of hermitian  squares plus commutators. 

The article is organized as follows. Section \ref{sec:prelim} fixes notation, terminology, and
gives some preliminaries. Then Section \ref{sec:main} contains our main results, 
and we conclude with remarks and algorithmic considerations in Section \ref{sec:finito}.

\section{Notation and preliminaries}\label{sec:prelim}

\subsection{Matrices}
  There is a natural partial ordering on $\bS_n$
  defined by $X\succeq Y$ if the symmetric
  matrix $X-Y$ is positive semidefinite; i.e., if its
  eigenvalues are all nonnegative.  
  Similarly, $X\succ Y$, if $X-Y$ is positive definite; 
  i.e., all its eigenvalues are positive.

  \subsection{Noncommutative $($nc$)$ polynomials}
  Even if $p$ is a univariate polynomial, it naturally has noncommutative derivatives which
  are polynomials in two freely noncommuting variables. 

Let $x=(x_{1},\ldots,x_{g})$ denote a $g$-tuple
 of free noncommuting variables
 and let  $\RR\ax$ denote the associative
$\RR$-algebra freely generated by $x$. Its elements are called {\bf $($nc$)$ polynomials}.
An element of the form $aw$ where $0\neq a\in \RR$ and
$w$ is a {\bf word} in the variables $x$
  is called a {\bf monomial} and $a$ its
{\bf coefficient}.
The empty word $\varnothing$ is the multiplicative identity for $\RR\ax$.

 There is a natural {\bf involution} ${}^T$  on $\RR\ax$ 
 that reverses words. For example, 
$$(2-  3 x_{1}^2 x_{2} x_{3})^T =2  -3 x_{3} x_{2} x_{1}^2.$$
  A polynomial $p$ is a {\bf symmetric polynomial} 
if $p^T=p$.  
  Because $x_j^{T}=x_j$ we refer to the variables 
  as {\bf symmetric variables}.
 The {\bf degree}  of an nc polynomial $p$, denoted $\deg(p)$, is 
 the length of the longest word appearing in $p$. 
 Let $\RR\ax_k$ \index{$\RR\ax_k$} denote the 
 polynomials of degree at most $k$.

\subsection{Derivatives}
Given a polynomial $p\in\R\ax$, 
the $\ell^{\rm th}$ {\bf noncommutative directional derivative} of $p$ 
 in the {\it direction} $h$ is
\bes
 p^{(\ell)}(x)[h]:=\left.\frac{d^\ell p(x+ t h)}{d t^\ell}\right|_{t=0}.
\ees
 Thus $p^{(\ell)}(x)[h]$ is the polynomial 
that evaluates to 
$$
\left.
\frac{d^\ell p(X+tH)}{dt^\ell}\right|_{t=0}\quad\textrm{for every $n\in\N$, and every choice of tuples }
X,H \in \bS_n^g.
$$
 Let 
  $p^{\prime}(x)[h]$ denote the noncommutative first derivative of $p$ and we denote 
 the {\bf Hessian}, the second noncommutative derivative of $p$ in the direction $h$, by
 $p^{\prime\prime}(x)[h]$. 
Equivalently, the Hessian of $p$ can also be defined as the part of the noncommutative
polynomial
$$r(x)[h]:=2\big(p(x+h)-p(x)\big)\in \RR\ax[h]:=\RR \langle x_1,\dots,x_g,\, h_1,\dots,h_g\rangle$$
that is homogeneous of degree two in $h$.

If $p^{\prime\prime} \neq 0$, that  is, if $p$ is an nc polynomial of degree two or more, 
then its Hessian $p^{\prime\prime}(x)[h]$ is a polynomial in 
the $2g$ variables $x_1,\ldots,x_g,h_1\ldots,h_g$
which is  homogeneous of degree two in $h$,  and
has degree equal to the degree of $p$.

\subsection{Commutators, cyclic equivalence}
A polynomial of the form $[p,q]:=pq-qp$ for $p,q\in\R\ax$ is a {\bf commutator}.
Two polynomials
$f,g\in \R\ax$ are called {\bf cyclically equivalent} ($f\csim g$)
if $f-g$ is a sum of commutators in $\R\ax$.
Cyclic equivalence can be easily checked, cf.~\cite[Remark 1.3]{KS}.

\subsection{Trace-convexity}
\def\cD{\mathcal D}

We now introduce the central notion used in this article.

\begin{defn}
A symmetric polynomial $p\in\R\ax$ is {\bf trace-convex} 
  if for each $n$ and each pair of $g$ tuples of $n\times n$
  symmetric matrices $X=(X_1,\dots,X_g)$ and $Y=(Y_1,\dots,Y_g)$,
 we have
\beq
\label{eq:1}
    \frac12 \big(\Tr p(X)+ \Tr p(Y)\big) \geq \Tr p\Big(\frac{X+Y}{2}\Big). 
\eeq
  Equivalently,
\begin{equation}
\label{eq:2}
    \frac{\Tr p(X)+ \Tr p(Y)}2 - \Tr p\Big(\frac{X+Y}{2}\Big)  \geq 0.
\end{equation}
We sometimes restrict \eqref{eq:1} or, equivalently, \eqref{eq:2}, to hold only for
$X,Y$ in a domain $\cD$. In this case $p$ is {\bf trace-convex on $\cD$}.
\end{defn}

\subsection{Related notions}
 Noncommutative polynomials and noncommutative rational functions arise
 in several contexts including systems theory \cite{BGMa, BGMb} and
 is a part of the new field of free (freely noncommutative) analysis \cite{SV06,KV,AM}.  Sums of squares
 representations are a theme in both the commutative and noncommutative settings \cite{Putinar}.

There is a related notion of {\bf matrix-convexity} of an nc polynomial. 
A symmetric polynomial $p\in\R\ax$ is matrix-convex if 
\[  \frac{p(X)+p(Y)}2 - p\Big(\frac{X+Y}{2}\Big)  \succeq 0\]
for all tuples of symmetric matrices $X,Y$.  Note, that even if $p$
 is a univariate polynomial, $p(x+y)$ is an nc polynomial in $x,y$ 
 as $X$ and $Y$ need not commute.
This convexity condition is  very strong  and leads to extreme rigidity.
For instance, by Helton-McCullough \cite{HM}, every matrix-convex polynomial is of degree at most two, 
 a result which depends on  two  key observations.  First,  $p$ is matrix-convex if and only if  $p^{\prime\prime}(x)[h]$ is matrix-positive,
i.e., 
\[
p^{\prime\prime}(X)[H]\succeq0
\]
for all $X,H\in \bS_n^g$ and $n\in\N$ \cite{CHSY}. Second, $q\in\R\ax$ is matrix-positive
if and only if it is a sum of hermitian squares, that is,
\[
q= \sum_j r_j^T r_j\]
for some $r_j\in\R\ax$ \cite{Hel, McC}.

We refer the reader to \cite{Kra,Eff,Han,OST,Uch}  for further studies of operator-monotonicity.

\section{Results}\label{sec:main}
We are now ready to present the  main results of this article
characterizing univariate trace-convex polynomials with  algebraic
certificates involving  sums of  squares.

\begin{theorem}[global version]
\label{thm:global}
If $p$ is a univariate polynomial, then the following are equivalent.
\ben[\rm(i)]
\item \label{it:i}
$p$ is convex;  
\item\label{it:ii}
$\operatorname{Tr} p$ is convex, i.e., $p$ is trace-convex;
\item\label{it:iii}
$p''(x)[h]$ is a sum of hermitian squares and commutators.
\een
\end{theorem}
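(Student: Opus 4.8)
The plan is to prove the cycle of implications $\eqref{it:iii}\Rightarrow\eqref{it:ii}\Rightarrow\eqref{it:i}\Rightarrow\eqref{it:iii}$. The implication $\eqref{it:iii}\Rightarrow\eqref{it:ii}$ is the easy direction and should go through by a trace-positivity argument: if $p''(x)[h]\csim \sum_j r_j^T r_j$, then for any symmetric matrices $X,H$ we get $\Tr p''(X)[H]=\sum_j \Tr (r_j(X,H)^T r_j(X,H))\ge 0$, since traces of commutators vanish and traces of hermitian squares are nonnegative. Integrating this along the segment from $\frac{X+Y}2$ to $X$ and to $Y$ (using $H=X-Y$, suitably scaled), or more directly invoking the standard fact that nonnegativity of the second derivative along all lines yields midpoint convexity of $\Tr p$, gives \eqref{eq:1}. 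The implication $\eqref{it:ii}\Rightarrow\eqref{it:i}$ is immediate from the classical Klein lemma quoted in the introduction (take $X,Y$ to be $1\times 1$ matrices, or commuting diagonal matrices, to reduce to the scalar case), so the real content is $\eqref{it:i}\Rightarrow\eqref{it:iii}$.

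For $\eqref{it:i}\Rightarrow\eqref{it:iii}$: a convex univariate polynomial $p$ of degree $d$ has $p''$ a nonnegative polynomial on $\R$, so $d=2m$ is even and, by the classical one-variable sum-of-squares decomposition, $p''(x)=\sum_i q_i(x)^2$ for univariate polynomials $q_i$. One then wants to lift this to a decomposition of the \emph{noncommutative} Hessian $p''(x)[h]$ modulo commutators. The key computational identity to establish is that for a monomial $x^k$ one has
\[
(x^k)''(x)[h] \;=\; \sum_{\substack{a+b = k-2\\ a,b\ge 0}} \big(x^a h x^b h x^{?}\big)\cdot(\text{combinatorial terms}),
\]
and that, working modulo cyclic equivalence, the Hessian of $x^k$ is cyclically equivalent to a manageable ``balanced'' form — concretely, one expects $p''(x)[h]\csim \sum_{a,b} c_{ab}\, h x^a h x^b$ with the coefficients $c_{ab}$ determined by $p''$. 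The crucial point is then a Gram-matrix argument: write $p''(x)[h]$ modulo commutators in terms of the vector $v=(hx^0, hx^1,\dots,hx^{m-1})$ (with $m = \deg p/2 - 1$, say) as $v^T G\, v$ plus commutators for a symmetric matrix $G$, and show that $G\succeq 0$ precisely when $p$ is convex; positive semidefiniteness of $G$ then yields the desired sum of hermitian squares (plus commutators) via a Cholesky-type factorization of $G$. Identifying $G$ with (a constant multiple of) the Hankel-type matrix whose quadratic form represents the scalar polynomial $p''$ should make the equivalence with scalar convexity transparent.

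The main obstacle I anticipate is the bookkeeping in the reduction of $p''(x)[h]$ to a ``normal form'' modulo cyclic equivalence: one must carefully track how $(x^k)''(x)[h]=\sum_{i+j=k-2}\big(x^i h x^j h x^{k-2-i-j}+\dots\big)$ collapses under $\csim$, and verify that the resulting quadratic form in the monomials $hx^i$ has the specific Hankel/moment structure that matches $p''$. A secondary subtlety is the parity/degree check: one needs $p''$ to be a perfect nonnegative polynomial (sum of squares of \emph{univariate} polynomials, which in one variable is automatic from nonnegativity), and to confirm that the noncommutative lift of each scalar square $q_i(x)^2$ is, modulo commutators, a hermitian square $r_i(x,h)^Tr_i(x,h)$. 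Once the Gram matrix $G$ is correctly identified, the equivalence $p$ convex $\iff$ $G\succeq 0$ $\iff$ $p''(x)[h]$ is a sum of hermitian squares and commutators closes the cycle.
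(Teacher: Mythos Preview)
Your treatment of $\eqref{it:iii}\Rightarrow\eqref{it:ii}\Rightarrow\eqref{it:i}$ is fine and matches the paper. The gap is in the execution of $\eqref{it:i}\Rightarrow\eqref{it:iii}$.

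First, the basis $v=(h,hx,\dots,hx^{m-1})$ cannot work. With these entries one has $(hx^i)^T(hx^j)=x^ih^2x^j$, and every such word is cyclically equivalent to $h^2x^{i+j}$; thus $v^TGv$ only reaches the cyclic classes in which the two $h$'s are adjacent. But already for $p=x^4$ one computes
\[
(x^4)^{\prime\prime}(x)[h]\ \csim\ 8\,hx^2h\;+\;4\,hxhx,
\]
and $hxhx$ is \emph{not} cyclically equivalent to any $h^2x^k$. One therefore needs monomials $x^ahx^b$ on both sides of $h$ in the border vector (as in the paper's worked example, where $v=(h,\,hx^2,\,xhx,\,x^2h)$).

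Second, and more seriously, the Gram matrix that arises is not the Hankel matrix encoding $p''$ as a scalar quadratic form, so the hoped-for shortcut ``$p$ convex $\iff$ Hankel of $p''$ is PSD $\iff G\succeq 0$'' does not go through. In the $x^4$ example the scalar Hankel of $p''=12x^2$ is $\begin{pmatrix}0&0\\0&12\end{pmatrix}$, whereas a valid Gram matrix in the basis $(xh,\,hx)$ is $\begin{pmatrix}4&2\\2&4\end{pmatrix}$; these are unrelated, and in general the Gram matrix is far from unique once the correct (larger) border vector is used. The paper circumvents this by first factoring $\frac{d^2p}{dx^2}=A\prod_i(x-b_i)^2$, then identifying $p^{\prime\prime}(x)[h]$ with the symmetrizer $\sym(x-b_1,\dots,x-b_{2d},h,h)$ (Lemma~\ref{lem:psymm}). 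The Gram matrix for the symmetrizer is built not from the coefficients of $p''$ but from the Hankel matrix with entries $\binom{2d+k}{\ell+j}^{-1}$, whose positive semidefiniteness is a nontrivial fact proved via an explicit moment representation (Lemma~\ref{lem:middPSD}) and then inflated blockwise (Lemma~\ref{lem:bo}, Proposition~\ref{lem:bsPSDMulti}). This combinatorial/moment step is the missing idea in your sketch; without it there is no mechanism linking convexity of $p$ to positivity of the correct Gram matrix.
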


Next we present a local version of Theorem \ref{thm:global}
characterizing univariate  polynomials which are trace-convex on a matrix-interval.
This time the  algebraic
certificates involve  weighted sums of  squares.

\begin{thm}[local version]\label{thm:local}
Suppose $p$ is a univariate polynomial and  $-\infty < a<b < \infty$.
\ben[\rm(1)]
\item
 $\operatorname{Tr} p$ is convex on $aI \prec X \prec bI$ if and
only if 
$p$ is convex on $(a,b)$ if and only if
$p''(x)[h]$ is cyclically equivalent to a polynomial of the form
\begin{align}
\label{eq:local}
 \sum_i^{\rm finite} q_i(x)[h]^T q_i(x)[h]&+
 \sum_{i}^{\rm finite} r_i(x)[h]^T(x-a)r_i(x)[h]\\
 \notag
 &+\sum_i^{\rm finite} s_i(x)[h]^T (b-x) s_i(x)[h]+
 \sum_{i}^{\rm finite} t_i(x)[h]^T(x-a)(b-x)t_i(x)[h]
 \\
 \notag
 &+ \sum_{i}^{\rm finite} (x-a)u_i(x)[h]^T(b-x)u_i(x)[h]
\end{align}
for some $q_i, r_i, s_i, t_i, u_i\in\R\ax[h]$ homogeneous of degree one in $h$.
 \item 
 $\Tr p$ is convex on $bI \prec X$ if and only if
 $p$ is convex on $(b, \infty)$ if and only if
$p''(x)[h]$ is cyclically equivalent to a polynomial of the form
\[\sum_i^{\rm finite} q_i(x)[h]^T q_i(x)[h] + \sum_{j}^{\rm finite} r_j(x)[h]^T(x-b)r_j(x)[h].\]
 \item 
  $\Tr p$ is convex on $ X \prec aI$ if and only if
$p$ is convex on $(-\infty, a)$ if and only if
$p''(x)[h]$ is cyclically equivalent to a polynomial of the form
\[\sum_i^{\rm finite} q_i(x)[h]^T q_i(x)[h] + \sum_{j}^{\rm finite}
r_j(x)[h]^T(a-x)r_j(x)[h].\]
\een
\end{thm}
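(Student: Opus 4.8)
The plan is to establish, for item (1), the three implications
``$\Tr p$ trace-convex on $aI\prec X\prec bI$'' $\Rightarrow$ ``$p$ convex on $(a,b)$'' $\Rightarrow$ ``$p''(x)[h]$ is of the form \eqref{eq:local}'' $\Rightarrow$ ``$\Tr p$ trace-convex on $aI\prec X\prec bI$'', closing the cycle; items (2) and (3) are handled identically, with one single one-sided weight ($x-b$, resp.\ $a-x$) replacing the pair $x-a$, $b-x$. The first implication is immediate by restricting \eqref{eq:1} to $1\times 1$ matrices $X=(x)$, $Y=(y)$, $x,y\in(a,b)$: this gives mid-point convexity of the continuous function $p$ on $(a,b)$, hence convexity. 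For the third, suppose $p''(x)[h]$ equals an expression of the form \eqref{eq:local} modulo a sum of commutators; evaluate at $X,H\in\bS_n$ with $aI\prec X\prec bI$ and apply $\Tr$. The commutators drop out and each of the five families has nonnegative trace: $\Tr\!\big(q_i(X)[H]^Tq_i(X)[H]\big)\ge 0$ trivially; $\Tr\!\big(r_i(X)[H]^T(X-aI)r_i(X)[H]\big)\ge 0$ since $X-aI\succ0$, and likewise for the $b-x$ terms; $\Tr\!\big(t_i(X)[H]^T(X-aI)(bI-X)t_i(X)[H]\big)\ge0$ because $X-aI$ and $bI-X$ commute and are positive semidefinite, so their product is positive semidefinite; and, writing $X-aI=P^2$ with $P\succ0$ and using cyclicity of the trace, $\Tr\!\big((X-aI)u_i(X)[H]^T(bI-X)u_i(X)[H]\big)=\Tr(MM^T)\ge0$ where $M:=P\,u_i(X)[H]^T(bI-X)^{1/2}$. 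Hence $\Tr p''(Z)[K]\ge0$ whenever $aI\prec Z\prec bI$, $K=K^T$, so the function $\Tr p\big((1-\tau)X+\tau Y\big)$, $\tau\in[0,1]$, has nonnegative second derivative on the segment joining any two points of the convex domain, which yields \eqref{eq:1}.

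The substantive implication is ``$p$ convex on $(a,b)$'' $\Rightarrow$ ``$p''(x)[h]$ is of the form \eqref{eq:local}''. The key device is the linear map $\Psi$ sending $F\in\R\ax[h]$ homogeneous of degree two in $h$ to
\[
\Psi(F)(s,t):=\tfrac12\,\Tr F\big(\operatorname{diag}(s,t)\big)\big[e_1e_2^T+e_2e_1^T\big]\in\R[s,t],
\]
so that $\Psi(hx^mhx^n)=\tfrac12(s^mt^n+s^nt^m)$. As traces of matrix commutators vanish, $\Psi$ annihilates commutators and so descends to cyclic-equivalence classes; it is injective there, since every degree-two-in-$h$ polynomial is cyclically equivalent to a combination of the $hx^mhx^n$ with $m\le n$, which $\Psi$ maps to linearly independent symmetric monomials. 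Writing $p=\sum_k c_kx^k$, the expansion $p''(x)[h]=2\sum_k c_k\sum_{i+j+l=k-2}x^ihx^jhx^l$ together with the cyclic reduction $x^ihx^jhx^l\csim hx^jhx^{l+i}$ gives
\[
\Psi\big(p''(x)[h]\big)(s,t)=\frac{p'(s)-p'(t)}{s-t}=\int_0^1 p''\big((1-\theta)t+\theta s\big)\,d\theta=:D(s,t)
\]
(ordinary derivatives on the right; note $D\ge 0$ on $[a,b]^2$ because $p''\ge0$ on $(a,b)$). By the classical description of univariate polynomials nonnegative on an interval (Markov--Luk\'acs), $p''(u)=\sigma_0(u)+(u-a)\sigma_1(u)+(b-u)\sigma_2(u)+(u-a)(b-u)\sigma_3(u)$ with the $\sigma_j$ sums of squares of univariate polynomials. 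Plugging this into the integral, with $u_\theta:=(1-\theta)t+\theta s$, $u_\theta-a=(1-\theta)(t-a)+\theta(s-a)$ and $b-u_\theta=(1-\theta)(b-t)+\theta(b-s)$, and expanding, $D$ becomes a finite sum of terms $w_1(s,t)\int_0^1 w_2(\theta)\,B(u_\theta)^2\,d\theta$, where $B$ runs over the polynomials occurring in the $\sigma_j$, $w_2\in\{1,\theta,1-\theta,\theta^2,(1-\theta)^2,\theta(1-\theta)\}$, and $w_1$ runs through $1$, $(s-a)$ and $(t-a)$, $(b-s)$ and $(b-t)$, $(s-a)(b-s)$ and $(t-a)(b-t)$, and $(b-s)(t-a)$ and $(s-a)(b-t)$.

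The key analytic observation is that each integral $\int_0^1 w_2(\theta)B(u_\theta)^2\,d\theta$ is itself a sum of squares in $\R[s,t]$: expanding $B(u_\theta)=\sum_j b_j(s,t)\theta^j$ with $b_j\in\R[s,t]$, the integral equals $\vec b(s,t)^T M_{w_2}\vec b(s,t)$, where $\vec b=(b_0,b_1,\dots)$ and $M_{w_2}=\big[\int_0^1 w_2(\theta)\theta^{i+j}\,d\theta\big]_{i,j}$ is the Hankel moment matrix of the positive measure $w_2(\theta)\,d\theta$ on $[0,1]$, hence positive semidefinite, so writing $M_{w_2}=L^TL$ exhibits the integral as $\|L\vec b\|^2$. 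Grouping the terms of $D$ by the weight $w_1$ and comparing with the $\Psi$-images of the five families of \eqref{eq:local} --- one computes $\Psi(q^Tq)=\tfrac12(\rho(s,t)^2+\rho(t,s)^2)$, $\Psi(r^T(x-a)r)=\tfrac12\big((s-a)\rho(s,t)^2+(t-a)\rho(t,s)^2\big)$, $\Psi\big((x-a)u^T(b-x)u\big)=\tfrac12\big((b-s)(t-a)\rho(s,t)^2+(s-a)(b-t)\rho(t,s)^2\big)$, and analogously for the $b-x$ and $(x-a)(b-x)$ families, where $\rho(s,t)=\sum_k\alpha_k s^{a_k}t^{b_k}$ is read off from $\sum_k\alpha_k x^{a_k}hx^{b_k}$ --- one assembles a polynomial $W$ of the form \eqref{eq:local}, whose $q_i,r_i,s_i,t_i,u_i$ are sums of monomials $x^ahx^b$ and hence homogeneous of degree one in $h$, with $\Psi(W)=D=\Psi\big(p''(x)[h]\big)$; injectivity of $\Psi$ then forces $p''(x)[h]\csim W$. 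For items (2) and (3) the only change is to use the half-line descriptions $p''=\sigma_0+(u-b)\sigma_1$ (resp.\ $p''=\sigma_0+(a-u)\sigma_1$), which now produce only the weights $1$ and $x-b$ (resp.\ $1$ and $a-x$).

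The main obstacle is assembling this dictionary correctly: that $\Psi$ descends to an injective map on cyclic-equivalence classes, the exact $\Psi$-images of all five families in \eqref{eq:local}, and --- the crux --- that the bivariate integrals coming out of the Markov--Luk\'acs decomposition of $p''$ are honest sums of squares carrying precisely the weights available in \eqref{eq:local}, a point that rests on the positive semidefiniteness of the Hankel moment matrices $M_{w_2}$. Granting this, the one-variable Positivstellensatz does the rest.
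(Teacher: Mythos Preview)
Your argument is correct and takes a genuinely different route from the paper's. The paper works directly in the free algebra via the symmetrizer: Lemma~\ref{lem:psymm} identifies $p''(x)[h]$ with $\sym_{d+2}(x-a_1,\dots,x-a_d,h,h)$ when $\frac{d^2p}{dx^2}=\prod_i(x-a_i)$, and Proposition~\ref{lem:bsPSDMulti} then produces, by explicit combinatorial bookkeeping, a cyclic SOS-with-weights representation of $\sym$ whose Gram blocks are built from the Hankel matrices with entries $1/\binom{2d+k}{j+\ell}$ (shown positive semidefinite in Lemma~\ref{lem:middPSD} via a beta-function moment computation). You instead pass to the commutative world through the two-point trace map $\Psi$, recognize $\Psi(p''(x)[h])$ as the divided difference $(p'(s)-p'(t))/(s-t)=\int_0^1 p''\big((1-\theta)t+\theta s\big)\,d\theta$, and read the weighted-SOS structure directly off this integral once the Markov--Luk\'acs decomposition of $p''$ is inserted; the Hankel positivity you invoke (moment matrices of $w_2(\theta)\,d\theta$ on $[0,1]$ with $w_2\in\{1,\theta,1-\theta,\theta^2,(1-\theta)^2,\theta(1-\theta)\}$) is the same phenomenon as Lemma~\ref{lem:middPSD}, just parametrized differently. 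Injectivity of $\Psi$ on cyclic classes then lifts the commutative certificate back to the free algebra, and the symmetry $u_{1-\theta}(s,t)=u_\theta(t,s)$ makes the two ``halves'' of each $\Psi$-image match up automatically. Your route is conceptually attractive because it explains \emph{why} Hankel moment matrices appear --- they are the moment matrices of the interpolation weights --- and it ties $p''(x)[h]$ to the Loewner kernel $(p'(s)-p'(t))/(s-t)$ familiar from matrix function theory; the paper's route is more self-contained and delivers the Gram matrices in closed form.
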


The proofs of Theorems \ref{thm:global} and \ref{thm:local} occupy the remainder
 of this section.

\subsection{Symmetrizer}

One of our main tools in analyzing second derivatives of
univariate nc polynomials is the following operation of symmetrization.

\begin{defn}
Given $d\in\N$ and nc polynomials $y_1,\dots,y_d$, define $\sym_d(y_1, \ldots, y_d)$ to be the
nc polynomial
 \[
 \sym_d(y_1, \ldots, y_d) = \frac{1}{d!} \sum_{\sigma \in S_d}
y_{\sigma(1)} \cdots y_{\sigma(d)}.
 \]
 Often will omit the subscript $d$.
\end{defn}

\begin{lemma}
Let $y_1, \ldots, y_d$ be nc polynomials.
If $\sigma \in S_d,$ then
\[
 \sym_d(y_{\sigma(1)}, \ldots, y_{\sigma(d)}) = \sym_d(y_1, \ldots, y_d).
\] 
\end{lemma}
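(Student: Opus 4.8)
The plan is to unwind the definition of $\sym_d$ applied to the permuted tuple and then reindex the sum over $S_d$ by a change of variable. Writing out the left-hand side directly from the definition,
\[
 \sym_d(y_{\sigma(1)}, \ldots, y_{\sigma(d)}) = \frac{1}{d!}\sum_{\tau \in S_d} y_{\sigma(\tau(1))} \cdots y_{\sigma(\tau(d))},
\]
the key observation is that the $j$-th factor of the $\tau$-term is $y_{(\sigma\circ\tau)(j)}$, so each summand is exactly $y_{\rho(1)}\cdots y_{\rho(d)}$ for $\rho = \sigma\tau \in S_d$.

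Next I would invoke the elementary fact that left translation $\tau \mapsto \sigma\tau$ is a bijection of $S_d$ onto itself. Hence, as $\tau$ ranges over $S_d$, so does $\rho = \sigma\tau$, and the sum is unchanged under this relabeling:
\[
 \frac{1}{d!}\sum_{\tau\in S_d} y_{(\sigma\tau)(1)}\cdots y_{(\sigma\tau)(d)} = \frac{1}{d!}\sum_{\rho\in S_d} y_{\rho(1)}\cdots y_{\rho(d)} = \sym_d(y_1,\ldots,y_d),
\]
the last equality being the definition of $\sym_d$. This completes the argument.

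There is essentially no genuine obstacle here; the only point requiring a modicum of care is bookkeeping the order of composition — one must check that feeding $y_{\sigma(i)}$ into slot $i$ and then permuting the slots by $\tau$ leaves $y_{\sigma(\tau(j))}$ in position $j$, so that the composite index map is $\sigma\circ\tau$ rather than $\tau\circ\sigma$. Either way the conclusion is identical, since both left and right translations are bijections of $S_d$. Equivalently, one can phrase the entire proof as the single remark that $\sym_d(y_1,\ldots,y_d)$ is the average of $y_{\rho(1)}\cdots y_{\rho(d)}$ over the group $S_d$, and is therefore invariant under precomposing the indexing permutation with any fixed element of $S_d$.
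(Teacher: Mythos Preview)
Your proof is correct and is exactly the standard reindexing argument underlying the paper's one-word proof (``Trivial''). You have simply made explicit the bijection $\tau\mapsto\sigma\tau$ on $S_d$ that justifies the immediate claim.
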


\begin{proof}
Trivial. 
\end{proof}

\begin{lemma}\label{lem:symmDist}
  Fix $1\le k\le d$. 
   If $y_1, \ldots, y_d$ be nc polynomials and $a_k$ is a constant, then
\[
\begin{split}
  \sym_d(y_1, \ldots, y_{k-1}, y_k - a_k, y_{k+1}, \ldots, y_d) = &
  \sym_{d}(y_1, \ldots, y_{k-1}, y_k, y_{k+1}, \ldots, y_d) \\
& -
a_k\sym_{d-1}(y_1, \ldots, y_{k-1}, y_{k+1}, \ldots, y_d).
\end{split}
\]
\end{lemma}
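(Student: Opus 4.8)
The plan is to prove Lemma \ref{lem:symmDist} by a direct manipulation of the defining sum over $S_d$, exploiting the multilinearity of the monomial $y_{\sigma(1)}\cdots y_{\sigma(d)}$ in each slot. First I would write out
\[
\sym_d(y_1,\dots,y_{k-1},y_k-a_k,y_{k+1},\dots,y_d)
=\frac{1}{d!}\sum_{\sigma\in S_d} y_{\sigma(1)}\cdots y_{\sigma(d)},
\]
where it is understood that in each term the factor labelled $k$ has been replaced by $y_k-a_k$. Since each word $y_{\sigma(1)}\cdots y_{\sigma(d)}$ depends linearly on the entry sitting in position $\sigma^{-1}(k)$, I split every term as the word with $y_k$ in that slot minus $a_k$ times the word with $a_k$ deleted from that slot. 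Because $a_k$ is a scalar it commutes out of the word, so the second piece contributes $\frac{a_k}{d!}\sum_{\sigma\in S_d}(\text{word in }y_1,\dots,\widehat{y_k},\dots,y_d)$, the hat denoting omission.

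The first piece reassembles immediately into $\sym_d(y_1,\dots,y_d)$. For the second piece I would argue that the map $S_d\to S_{d-1}$ obtained by deleting the symbol $k$ from the one-line notation of $\sigma$ (equivalently, restricting $\sigma$ to the positions not mapping to $k$, then relabelling) is exactly $d$-to-one onto $S_{d-1}$: there are $d$ choices for which position $\sigma$ sends to $k$, and once that is fixed the induced permutation of the remaining $d-1$ symbols ranges freely over $S_{d-1}$. Hence
\[
\frac{1}{d!}\sum_{\sigma\in S_d}(\text{word in }y_1,\dots,\widehat{y_k},\dots,y_d)
=\frac{d}{d!}\sum_{\tau\in S_{d-1}} y_{\tau(1)}\cdots y_{\tau(d-1)}
=\sym_{d-1}(y_1,\dots,y_{k-1},y_{k+1},\dots,y_d),
\]
where in the middle expression the indices range over $\{1,\dots,d\}\setminus\{k\}$. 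Combining the two pieces gives the claimed identity.

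There is really no serious obstacle here; the only point requiring a moment's care is the combinatorial bookkeeping in the second sum, namely checking that the relevant fibering of $S_d$ over $S_{d-1}$ has constant fibre size $d$ so that the $\frac{1}{d!}$ collapses correctly to $\frac{1}{(d-1)!}$. One can equivalently avoid the explicit fibering by appealing to the previous lemma (invariance of $\sym_d$ under permutation of its arguments): move the slot $k$ to the last position at no cost, apply linearity in the final factor, and then observe that $\frac{1}{d!}\sum_{\sigma\in S_d} y_{\sigma(1)}\cdots y_{\sigma(d-1)}$ — with $y_d$ dropped — is a sum of $d!$ words each repeated $d$ times, namely $d\cdot\sym_{d-1}$ of the remaining arguments. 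Either route is routine; I would present the symmetry-reduction version since it keeps the notation lightest.
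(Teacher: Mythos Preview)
Your proof is correct and follows essentially the same approach as the paper's: both distribute $y_k-a_k$ in each summand to split off $\sym_d(y_1,\dots,y_d)$, then handle the remaining $a_k$-piece by a fibre-counting/bijection argument showing that the $d!$ shortened words collapse to $d$ copies of the $(d-1)!$ words defining $\sym_{d-1}$. The paper writes out an explicit bijection $S_{d,j}\to S_{d,1}$ for each position $j$ with $\sigma(j)=k$, whereas you phrase the same count as a $d$-to-one surjection $S_d\to S_{d-1}$; these are equivalent bookkeeping.
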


\begin{proof}
The polynomial $\sym_d(y_1, \ldots, y_{k-1}, y_k - a_k, y_{k+1},
\ldots, y_d)$ is a sum of products of polynomials, each of which contains
 $y_k - a_k$. Distributing the $y_k -
a_k$ in each product gives
\begin{align}
 \label{eq:sumSymm}
\frac{1}{d!} \sum_{\sigma \in S_d} y_{\sigma(1)} \cdots y_{\sigma(d)}
- a_k \frac{1}{d!} \sum_{j=1}^d \sum_{\sigma(j) = k} y_{\sigma(1)} \cdots
y_{\sigma(j-1)}y_{\sigma(j+1)}\cdots y_{\sigma(d)}.
\end{align}
 For $1\le j\le d$, let $S_{d,j}$ denote those $\sigma\in S_d$ such
 that $\sigma(j)=k$.  
Given a permutation $\sigma \in S_{d,j}$, define 
$\tilde{\sigma}$ to be
\[
 \tilde{\sigma}(i) = \left\{
 \begin{array}{cc}
  k& i=1\\
  \sigma(i-1)& 1 \leq i-1 < j\\
  \sigma(i)& j < i
 \end{array}
\right.
\]
Given a fixed $j$, the mapping from $S_{d,j}$ to $S_{d,1}$
 defined by  $\sigma \mapsto \tilde{\sigma}$  
  is a bijection and further, 
\[
 y_{\tilde{\sigma}(2)} \cdots y_{\tilde{\sigma}(d)} = y_{\sigma(1)} \cdots
    y_{\sigma(j-1)} y_{\sigma(j+1)} \cdots y_{\sigma(d)}.
\]
Therefore (\ref{eq:sumSymm}) simplifies to
\begin{multline*}
 \frac{1}{d!} \sum_{\sigma \in S_d} y_{\sigma(1)} \cdots y_{\sigma(d)}
- a_k \frac{1}{(d-1)!} \sum_{\sigma(1) = k} y_{\sigma(2)} \cdots
\cdots y_{\sigma(d)} \\
 =     \sym_{d}(y_1, \ldots, y_{k-1}, y_k, y_{k+1}, \ldots, y_d)-
a_k\sym_{d-1}(y_1, \ldots, y_{k-1}, y_{k+1}, \ldots, y_d).
\qedhere
\end{multline*}
\end{proof}

\subsection{From commutative to noncommutative polynomials}
\def\ds{\displaystyle}
 For a univariate polynomial $p$, 
 let $\ds \frac{dp}{dx}$ and $\ds \frac{d^2 p}{dx^2}$ denote the ordinary
   first and second derivative of $p$ (and $p^{\prime}$ and $p^{\prime\prime}$ the first and second nc derivative of $p$).

\begin{lemma}
\label{lem:psymm}
Let $p$ be a univariate polynomial.
If 
\[\frac{d^2p}{dx^2} = (x - a_1) \ldots (x - a_d),\]
 then 
\[
 p^{\prime\prime}(x)[h] = \sym_{d+2}(x-a_1, \ldots, x-a_d, h, h).
\]
\end{lemma}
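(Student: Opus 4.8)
The plan is to verify the claimed identity by a direct computation of the Hessian from its definition, expanding $p$ in terms of the given factorization of $\frac{d^2p}{dx^2}$ and matching homogeneous components. First I would reduce to the case where $p$ itself is a monic polynomial of degree $d+2$: since $\frac{d^2p}{dx^2} = (x-a_1)\cdots(x-a_d)$ determines $p$ up to an affine polynomial $cx+e$, and affine polynomials have vanishing Hessian (they are of degree at most one, so $p''(x)[h]=0$), it suffices to prove the identity for any one antiderivative of $(x-a_1)\cdots(x-a_d)$. Note also $\sym_{d+2}(x-a_1,\ldots,x-a_d,h,h)$ is indeed homogeneous of degree two in $h$ and of total degree $d+2$, matching the degree of $p$, which is a good consistency check.

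The key computational step is to determine $p''(x)[h]$ directly. Recall from the excerpt that $p''(x)[h]$ is the degree-two-in-$h$ homogeneous part of $2\bigl(p(x+h)-p(x)\bigr)$, equivalently the coefficient (up to the factor $2!$) extracted by the second directional derivative. For a single word $x^m$ of degree $m$, the degree-two-in-$h$ part of $(x+h)^m$ is $\sum_{0\le i<j\le m-2\ \text{(appropriately indexed)}} x^{i} h x^{j} h x^{m-2-i-j}$ — more cleanly, it is $\sum_{k+l+n=m-2} x^k h x^l h x^n$, so $p''(x)[h]$ applied to $x^m$ yields $2\sum_{k+l+n=m-2} x^k h x^l h x^n$. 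I would then observe that for $p$ monic of degree $m=d+2$, this expression is exactly $\sym_{d+2}$ of $d$ copies of $x$ together with $h,h$: indeed $\sym_{d+2}(x,\ldots,x,h,h)=\frac{1}{(d+2)!}\sum_{\sigma}$ (words), and by symmetry in the $d$ identical $x$'s and the two identical $h$'s the multiplicities collapse, leaving $\frac{2\cdot d!}{(d+2)!}\cdot(d+2)!/(2\,d!)\cdot$ (sum over placements) — the bookkeeping gives precisely $2\sum_{k+l+n=d} x^k h x^l h x^n$. This establishes the identity when all $a_i=0$.

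The remaining step handles general roots $a_i$, and this is where Lemma \ref{lem:symmDist} does the work — so I do not expect a genuine obstacle, only careful bookkeeping. The idea is to induct on the number of nonzero $a_i$, or equivalently to expand $(x-a_1)\cdots(x-a_d)=\sum_{S\subseteq\{1,\ldots,d\}} (-1)^{|S|}\bigl(\prod_{i\in S}a_i\bigr) x^{d-|S|}$ and integrate twice, getting $p$ as a corresponding linear combination of the monic antiderivatives handled above. Applying the already-proven monomial case to each term and then reassembling, the sum that appears is exactly the expansion of $\sym_{d+2}(x-a_1,\ldots,x-a_d,h,h)$ produced by iterating Lemma \ref{lem:symmDist} to pull each constant $a_k$ out of the $k$-th slot: each such step replaces $\sym$ on the full list by $\sym$ on the list with slot $k$ deleted, times $a_k$, with the correct sign, which matches term-by-term the integrate-twice expansion of $p$. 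Linearity of $p\mapsto p''(x)[h]$ in $p$ closes the argument. The main thing to be careful about is aligning the two combinatorial expansions — the one from $\frac{d^2p}{dx^2}=\prod(x-a_i)$ integrated twice, and the one from repeatedly applying Lemma \ref{lem:symmDist} — but since both are governed by the same subset sums $\sum_{S}(-1)^{|S|}\prod_{i\in S}a_i$, they agree.
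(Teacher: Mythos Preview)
Your approach is essentially the paper's own: first establish the identity when all $a_i=0$ by comparing coefficients of each word $x^khx^\ell hx^n$ in $p''(x)[h]$ and in $\sym_{d+2}(x,\ldots,x,h,h)$, then handle general roots by expanding $\prod_i(x-a_i)$ in elementary symmetric sums, using linearity of the Hessian, and invoking Lemma~\ref{lem:symmDist} repeatedly to reassemble the symmetrizer. One small slip to fix: in the base case $p$ is not monic but $p=\tfrac{1}{(d+2)(d+1)}x^{d+2}+\ell$, so both sides carry the factor $\tfrac{2}{(d+2)(d+1)}$ in front of $\sum_{k+\ell+n=d}x^khx^\ell hx^n$; your garbled ``$\tfrac{2\cdot d!}{(d+2)!}\cdot(d+2)!/(2\,d!)$'' should simply read $\tfrac{2\,d!}{(d+2)!}$, which then matches.
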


\begin{proof}First consider the simple case  $\ds\frac{d^2p}{dx^2}(x) = x^d$
 for which
\[p = \frac{1}{(d+2)(d+1)} x^{d+2} + \ell,\]
where $\ell$ is some linear
polynomial. Computing the second nc
derivative of $x^{d+2}$ gives $2$ times the sum of all words of degree $d$ in $x$ and
degree two in $h$.
Therefore, for each $i < j$, the coefficient of the word with an $h$ as the
$i^{\rm th}$
and $j^{\rm th}$ letters in $p''(x)[h]$ is
\[\frac{2}{(d+2)(d+1)} .\]
Examining $\sym(x, \ldots, x, h, h)$, we see,
for each $i < j$, the coefficient of the word with an $h$ as the $i^{\rm th}$
and $j^{\rm th}$ letters is
\[\frac{1}{(d+2)!} d!2! = \frac{2}{(d+2)(d+1)}.\]

Next consider the general case $\ds\frac{d^2p}{dx^2}= (x - a_1) \ldots (x - a_d)$.  We
see that
\[\frac{d^2 p}{dx^2} = \sum_{k=0}^{d} \left( \sum_{\substack{1 \leq i_1 < \cdots < i_{d-k}
\leq n}}
(-1)^{d-k} a_{i_1}\ldots
a_{i_{d-k}} x^k\right).\]
By linearity,
\[
 p^{\prime\prime}(x)[h] = \sum_{k=0}^{d} \sum_{1 \leq i_1 < \cdots < i_{d-k} \leq n}
(-1)^{d-k} a_{i_1}\ldots
a_{i_{d-k}} \sym_{k+2}(x, \ldots, x, h, h).
\]
Repeated application of Lemma \ref{lem:symmDist} to each $(x-a_i)$ in the
expression $\sym(x-a_1, \ldots, x-a_d, h, h)$ shows that
 this last expression is equal to $\sym(x-a_1, \ldots, x-a_d, h, h)$.
\end{proof}

\subsection{On Hankel matrices}

Recall that an $(n+1)\times (n+1)$ square matrix $T$ is called {\bf Hankel}
if it has  constant anti-diagonals, i.e.,
\[
T_{i,j}=T_{i-1,j+1}
\]
for $0<i\leq n$ and $0\leq j<n$. A finite sequence $c_0,\ldots,c_{2m}$ generates
an $(m+1)\times( m+1)$ Hankel matrix $T$ with 
\[
T_{i,j}= c_{i+j}, \qquad 0\leq i,j\leq m.
\]
We refer to \cite{Dym} for more on Hankel matrices.

\begin{lemma}
\label{lem:middPSD}For each $k, \ell, d \in \mathbb{N}$ with $\ell\le k$,   there is a regular
 Borel measure supported on $\mathbb R$ such that
\[
   \frac{1}{{{2d+k}\choose{j+\ell}}} = \int x^j \, d\mu,
\]
 for $0\le j\le 2d.$  In particular, the Hankel matrix generated
by the finite sequence
$\ds \left( \frac1{{{2d+k}\choose{\ell+j}} }\right)_{j=0}^{2d}$
 is positive semidefinite. 
\end{lemma}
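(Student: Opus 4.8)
The plan is to recognize $1/\binom{2d+k}{\ell+j}$ as a moment sequence by exhibiting an explicit integral representation of the Beta-function type, and then to invoke the classical Hamburger moment problem: a (finite) Hankel matrix generated by the moments of a positive measure is positive semidefinite. First I would recall the Beta integral $\int_0^1 t^{a}(1-t)^{b}\,dt = \frac{a!\,b!}{(a+b+1)!} = \frac{1}{(a+b+1)\binom{a+b}{a}}$ for nonnegative integers $a,b$. The idea is to choose the exponents so that, as $j$ ranges over $0,\dots,2d$, the quantity $\frac{1}{\binom{2d+k}{\ell+j}}$ appears (up to a $j$-independent positive constant) as $\int_0^1 t^{j}\cdot(\text{stuff})\,dt$. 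Writing $N = 2d+k$, we want $\binom{N}{\ell+j}$ in the denominator; taking $a = \ell+j$ and $b = N-\ell-j$ in the Beta integral gives $\int_0^1 t^{\ell+j}(1-t)^{N-\ell-j}\,dt = \frac{1}{(N+1)\binom{N}{\ell+j}}$. The condition $\ell \le k$ (together with $j \le 2d$, so $\ell + j \le k + 2d = N$, and $\ell \ge 0$) guarantees $b = N-\ell-j \ge 0$ whenever $j \le 2d$; in fact we need $\ell + j \le N$, i.e. $j \le N - \ell = 2d + (k-\ell)$, which holds since $k \ge \ell$. Thus all the exponents are genuine nonnegative integers and the integral makes sense.

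The key computation is therefore:
\[
\frac{1}{\binom{2d+k}{\ell+j}} = (2d+k+1)\int_0^1 t^{\ell+j}(1-t)^{\,2d+k-\ell-j}\,dt,\qquad 0\le j\le 2d.
\]
To massage the right-hand side into a moment $\int x^j\,d\mu$ with a $j$-independent measure, I would substitute $x = t/(1-t)$ (equivalently $t = x/(1+x)$), which maps $(0,1)$ bijectively onto $(0,\infty)$, with $t = x/(1+x)$, $1-t = 1/(1+x)$, and $dt = (1+x)^{-2}\,dx$. Then $t^{\ell+j}(1-t)^{2d+k-\ell-j}\,dt = x^{\ell+j}(1+x)^{-(2d+k)}\,(1+x)^{-2}\,dx = x^{j}\cdot x^{\ell}(1+x)^{-(2d+k+2)}\,dx$. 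Hence, setting
\[
d\mu(x) = (2d+k+1)\,x^{\ell}\,(1+x)^{-(2d+k+2)}\,\mathbf 1_{(0,\infty)}(x)\,dx,
\]
which is a finite positive (regular Borel) measure on $\mathbb R$ because the integrand is nonnegative, continuous, and integrable near $0$ (since $\ell \ge 0$) and near $\infty$ (since $2d+k+2 - \ell \ge 2$), we obtain exactly $\frac{1}{\binom{2d+k}{\ell+j}} = \int x^j\,d\mu$ for $0 \le j \le 2d$.

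Finally, the positive semidefiniteness of the Hankel matrix $H = \big(c_{i+j}\big)_{i,j=0}^{d}$ with $c_m = \int x^m\,d\mu$ follows in one line: for any real vector $v = (v_0,\dots,v_d)$,
\[
v^T H v = \sum_{i,j=0}^{d} v_i v_j \int x^{i+j}\,d\mu = \int \Big(\sum_{i=0}^{d} v_i x^i\Big)^2 d\mu \ge 0,
\]
and $i+j$ ranges over $0,\dots,2d$, which is precisely the range in which the moment identity was established. I do not anticipate a serious obstacle here; the only point requiring care is the bookkeeping on exponents — verifying that $\ell \le k$ and $0 \le j \le 2d$ force $0 \le \ell + j \le 2d+k$, so that the Beta integral is legitimate — and confirming integrability of $d\mu$ at the endpoints, both of which are routine.
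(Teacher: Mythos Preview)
Your proof is correct and follows essentially the same route as the paper: both produce the measure $d\mu(x)=(2d+k+1)\,x^{\ell}(1+x)^{-(2d+k+2)}\,dx$ on $(0,\infty)$ and identify the moments via the Beta integral through the substitution $x=t/(1-t)$ (the paper runs the substitution in the opposite direction, starting from the measure and reducing to the Beta integral). Your version is in fact slightly cleaner, since you motivate the measure rather than positing it, and you also spell out the Hankel positivity argument and the role of the hypothesis $\ell\le k$ in ensuring nonnegative exponents.
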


\begin{proof}
Consider the measure
\[ 
  d\mu = \frac{2d+k+1}{(x+1)^{2d+k+2}} x^k \, dx
\] 
  on the half line $[0, \infty)$. The $j^{th}$ moment of this
measure is
\[ \int_0^{\infty} x^{j+k} \frac{2d+k+1}{(x+1)^{2d+k+2}}\, dx. \]  Using the
substitution $x = \frac{1}{t} - 1$ gives
\[ \int_0^{1} (2d+k+1)(1-t)^{j+k} t^{2d+k - (j+k)} \, dt = (2d+k+1)\ B\big(2d+k-(j+k)+1, (j+k)+1\big) =
\frac{1}{{{2d+k}\choose{j+k}}},\]
where $B(x,y)$ denotes the beta function.
\end{proof}

\begin{lemma}
 \label{lem:bo}
 Given a positive integer $d$ and $n_0,n_1,\dots,n_d\in\mathbb N$, let $\bo_{i,j}$ 
  denote the $n_i\times n_j$ matrix all of whose entries are $1$.  If $H = (H_{i,j})_{i,j=0}^d$
 is a positive semidefinite $(d+1)\times (d+1)$ matrix, then the block matrix
 (of total size $n\times n$ where $n=\sum n_j$)
\[
  C = \begin{pmatrix}   H_{i,j} \bo_{i,j} \end{pmatrix}_{i,j=0}^d
\]
 is also positive semidefinite.
\end{lemma}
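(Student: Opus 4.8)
The statement is really just a block-structured version of the fact that the Schur (Hadamard) product of two positive semidefinite matrices is positive semidefinite. The plan is to realize $C$ as a Hadamard product $H' \circ J$, where $H'$ is a suitable $n \times n$ inflation of $H$ that is positive semidefinite, and $J$ is the $n \times n$ all-ones matrix, which is positive semidefinite of rank one. Concretely, $H' = (H_{i,j}\bo_{i,j})_{i,j=0}^d$ only if we read the $\bo_{i,j}$ entrywise — but in fact the cleanest route is different and avoids Hadamard products altogether.

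First I would exhibit an explicit factorization. Write $H = R^T R$ for some $(d+1)\times(d+1)$ real matrix $R$ (possible since $H \succeq 0$), say with columns $R = (\rho_0 \mid \rho_1 \mid \cdots \mid \rho_d)$, so that $H_{i,j} = \langle \rho_i, \rho_j\rangle$. Let $e_i \in \RR^{n_i}$ denote the all-ones column vector, so that $\bo_{i,j} = e_i e_j^T$. Then the $(i,j)$ block of $C$ is $H_{i,j}\, e_i e_j^T = (\rho_i^T \otimes e_i)^T(\rho_j^T \otimes e_j)$ appropriately interpreted; more transparently, define the $(d+1) \times n$ block matrix $W$ whose $j$-th block of columns is $\rho_j\, e_j^T$ (a $(d+1)\times n_j$ matrix), i.e. $W = \big( \rho_0 e_0^T \mid \rho_1 e_1^T \mid \cdots \mid \rho_d e_d^T \big)$. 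A direct block computation gives $(W^T W)_{i,j} = e_i \rho_i^T \rho_j e_j^T = H_{i,j}\, e_i e_j^T = H_{i,j}\bo_{i,j}$, which is exactly the $(i,j)$ block of $C$. Hence $C = W^T W \succeq 0$.

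The verification that $(W^T W)_{i,j} = H_{i,j}\bo_{i,j}$ is the only computational step, and it is a one-line block-matrix multiplication using $\rho_i^T \rho_j = H_{i,j}$ (a scalar) and $e_i e_j^T = \bo_{i,j}$; I would just state it. There is no real obstacle here — the lemma is elementary — so the main thing to get right is bookkeeping: making sure the block sizes match ($\rho_j e_j^T$ is $(d+1)\times n_j$, so $W$ is $(d+1)\times n$ with $n = \sum_j n_j$, and $W^T W$ is $n \times n$ with the advertised block structure). An alternative phrasing, if one prefers to cite a known result rather than factor explicitly, is to note that $C$ is the Hadamard product of the all-ones matrix $\mathbf{1}_{n\times n}$ (rank one, positive semidefinite) with the matrix obtained from $H$ by replacing each entry $H_{i,j}$ with the constant block $H_{i,j}\bo_{i,j}$ — but that latter matrix is itself $H \otimes \mathbf{1}$ up to a permutation of rows and columns grouping equal blocks, hence positive semidefinite, and then the Schur product theorem finishes it. Either way the proof is short.
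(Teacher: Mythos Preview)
Your primary argument via the explicit factorization $C=W^T W$ is correct and clean; the block computation $(W^T W)_{i,j}=e_i\rho_i^T\rho_j e_j^T = H_{i,j}\,e_ie_j^T$ is exactly as you say. This is, however, a different route from the paper's. The paper instead picks $N=\max_j n_j$, forms the Kronecker product $H\otimes \bo$ with $\bo$ the $N\times N$ all-ones matrix (positive semidefinite as a tensor of two positive semidefinite matrices), and then observes that $C$ is obtained from $H\otimes\bo$ by deleting matching rows and columns, i.e.\ compressing to a coordinate subspace, which preserves positive semidefiniteness. Your approach has the advantage of giving an explicit square-root of $C$ of minimal row size $d+1$, while the paper's avoids factoring $H$ and is marginally more conceptual. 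One small correction to your closing alternative: when the $n_j$ are not all equal, $C$ is not a permutation of any $H\otimes\bo$; one really needs a compression (principal submatrix) of $H\otimes\bo$ with $\bo$ of size $\max_j n_j$, which is precisely the paper's argument. Also note that the Hadamard product with the all-ones matrix is the identity operation, so that part of the alternative phrasing is vacuous as stated.
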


\begin{proof}
  Let $N$ denote the max of $\{n_0,\dots,n_d\}$ and let $\bo$ denote the $N\times N$ matrix
  each of whose entries is $1$.  The tensor (Kronecker) product $H\otimes \bo$ is positive
  semidefinite since both $H$ and $\bo$ are.  Finally, $C$ is obtained from $H\otimes \bo$
  by compressing to a subspace (removing appropriate rows and columns) and is thus positive semidefinite.
\end{proof}

\subsection{A uniform representation of the symmetrizer}

\begin{prop}
 \label{lem:bsPSDMulti}
 Fix a positive integer $d$ and nonnegative integer $k$.  
There exists a vector-valued polynomial $W(x,h,c)$ in the nc variables $x$ and $h$
 and commuting variables $c=(c_1,\dots,c_d)$ (thus each $c_j$ commutes with all other variables)
 and positive semidefinite matrices $C$
 and $C_0,\dots,C_k$  such that
\begin{enumerate}[\rm (1)]
 \item the polynomial $\sym(x-b_1,\ldots, x-b_{2d}, h_1, h_2)$ (in the nc variables $x,h_1,h_2$
 and commuting variables $b_1,\dots,b_{2d}$) is cyclically
equivalent to
\begin{align}
\label{eq:simple}
 W(x,h_1,b_1, \ldots, b_d)^T \,C\, W(x, h_2,b_{d+1}, \ldots, b_{2d});
\end{align}
 \item 
  the polynomial
     $\sym(x-a_1, \ldots, x-a_k, x-b_1, \ldots,x-b_{2d}, h_1, h_2)$ 
  (in the nc variables $x,h_1,h_2$ and commuting variables $a_1,\dots,a_k;b_1,\dots,b_{2d}$) is cyclically
equivalent to
\begin{align}
\label{eq:fullCycSum}
\sum_{\ell=0}^{k} \sum_{\tau \in S_k} &(x-a_{\tau(1)}) \cdots (x-a_{\tau(\ell)})
W(x,h_1,b_1, \ldots, b_d)^T (x-a_{\tau(\ell+1)}) \cdots\\
\notag
& \cdots (x-a_{\tau(k)}) C_{\ell}
W(x,h_2,b_{d+1}, \ldots, b_{2d}).
\end{align}
\end{enumerate}
\end{prop}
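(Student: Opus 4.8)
The plan is to prove both parts at once by putting the symmetrizer into a normal form modulo cyclic equivalence; the decisive---and essentially only---structural input is that $a_1,\dots,a_k,b_1,\dots,b_{2d}$ are commuting scalars, so that the linear factors $x-a_i$ and $x-b_j$ all commute with one another (though of course not with $x,h_1,h_2$), and positivity of the matrices is then handed off to Lemmas~\ref{lem:bo} and~\ref{lem:middPSD}. Part~(1) is the case $k=0$ of part~(2) --- then $S_0$ is trivial, only $\ell=0$ survives, and \eqref{eq:fullCycSum} becomes \eqref{eq:simple} with $C=C_0$ --- so it suffices to treat part~(2). For the normal form, abbreviate $L_1,\dots,L_k:=x-a_1,\dots,x-a_k$ and $L_{k+1},\dots,L_{k+2d}:=x-b_1,\dots,x-b_{2d}$, and for $A\subseteq\{1,\dots,k+2d\}$ set $\Theta_A:=h_1\big(\prod_{i\in A}L_i\big)h_2\big(\prod_{i\notin A}L_i\big)$ (well defined since the $L_i$ commute). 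Since $h_1$ occurs exactly once in every monomial, rotating it to the front gives $\sym(L_1,\dots,L_{k+2d},h_1,h_2)\csim h_1\,\sym(L_1,\dots,L_{k+2d},h_2)$, and since the $L_i$ commute the latter collapses according to which of them stand to the left of $h_2$, each subset $A$ being attained by exactly $|A|!\,(k+2d-|A|)!$ of the $(k+2d+1)!$ monomials; hence
\[
  \sym(x-a_1,\dots,x-a_k,x-b_1,\dots,x-b_{2d},h_1,h_2)\;\csim\;\sum_{A\subseteq\{1,\dots,k+2d\}}c_{|A|}\,\Theta_A,\qquad c_m:=\frac{1}{(k+2d+1)\binom{k+2d}{m}}.
\]
(Equivalently, one may first expand each factor $x-(\cdot)$ via Lemma~\ref{lem:symmDist}, reducing to the monomial case $\sym(x,\dots,x,h_1,h_2)$, and reassemble.)

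Next I would introduce the ansatz and match coefficients. Let $W(x,h,c_1,\dots,c_d)$ have entries indexed by subsets $T\subseteq\{1,\dots,d\}$, namely $W_T(x,h,c):=\big(\prod_{i\in T}(x-c_i)\big)\,h\,\big(\prod_{i\notin T}(x-c_i)\big)$; commutativity of the $c_i$ yields $W_T(x,h,c)^T=W_{T^c}(x,h,c)$, where $T^c:=\{1,\dots,d\}\setminus T$. Put
\[
  (C_\ell)_{T,S}:=\frac{1}{\ell!\,(k-\ell)!\,(k+2d+1)}\binom{k+2d}{(k-\ell)+|T|+|S|}^{-1}\qquad(T,S\subseteq\{1,\dots,d\},\ 0\le\ell\le k),
\]
so that $C:=C_0$ has $C_{T,S}=c_{|T|+|S|}$. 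I would then verify directly that the right-hand side of \eqref{eq:fullCycSum} is cyclically equivalent to $\sum_A c_{|A|}\Theta_A$: expanding \eqref{eq:fullCycSum} entry by entry, replacing each $W_T^{\,T}$ by $W_{T^c}$, commuting all linear factors past one another, and rotating each monomial to begin with $h_1$, every monomial becomes a $\Theta_A$ in which $A$ records precisely the linear factors lying in the arc from $h_1$ to $h_2$; and on collecting terms the combinatorics works out --- for fixed $\ell$ the sum over $\tau\in S_k$ hits each $(k-\ell)$-subset of $\{1,\dots,k\}$ (the $a$-indices in that arc) with multiplicity $\ell!\,(k-\ell)!$, while $(T,S)$ corresponds bijectively to the part of $A$ coming from the $x-b_j$. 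Combined with the first paragraph, this shows the right-hand side of \eqref{eq:fullCycSum} is cyclically equivalent to the symmetrizer of part~(2). The matching is term by term, so no independence statement about the $\Theta_A$ is needed.

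It then remains only to check that each $C_\ell$ is positive semidefinite, and here Lemmas~\ref{lem:bo} and~\ref{lem:middPSD} do exactly the work required. Since $(C_\ell)_{T,S}$ depends only on $|T|$ and $|S|$, grouping subsets of $\{1,\dots,d\}$ by cardinality displays $C_\ell$ in the block form $\big(g_\ell(a+b)\,\bo_{a,b}\big)_{a,b=0}^{d}$ of Lemma~\ref{lem:bo}, where $\bo_{a,b}$ is the all-ones $\binom da\times\binom db$ matrix and $g_\ell(q)=\big(\ell!\,(k-\ell)!\,(k+2d+1)\big)^{-1}\binom{k+2d}{(k-\ell)+q}^{-1}$; so by that lemma it suffices that the $(d+1)\times(d+1)$ Hankel matrix $\big(\binom{k+2d}{(k-\ell)+a+b}^{-1}\big)_{a,b=0}^{d}$ be positive semidefinite. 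That is Lemma~\ref{lem:middPSD} with $k-\ell$ in the role of its $\ell$ (legitimate since $0\le k-\ell\le k$ and $0\le a+b\le 2d$): those reciprocal binomials are the moments $\int x^{a+b}\,d\mu$ of a positive Borel measure, so the Hankel matrix is a Gram matrix. The same formulas show $W$ and $C_0,\dots,C_k$ depend only on $d$ and $k$, not on the $a_i,b_j$, as the statement requires.

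The step I expect to be the main obstacle is the bookkeeping in the matching: after each rotation and use of commutativity one must track exactly which of the $k$ factors $x-a_i$ land in the arc from $h_1$ to $h_2$ and which in the complementary arc (the number of the latter being $\ell$), and then confirm that the combinatorial multiplicities so produced are precisely those of the reciprocal binomials. Everything else is routine once commutativity of the scalar variables is exploited systematically.
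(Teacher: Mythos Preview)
Your proposal is correct and follows essentially the same route as the paper's proof: both reduce (1) to the case $k=0$ of (2), cyclically rotate each monomial of the symmetrizer to a canonical position, exploit commutativity of the linear factors $x-a_i,\,x-b_j$ so that a term is determined by the subset landing in the $h_1$--$h_2$ arc (yielding the reciprocal-binomial coefficient $c_m=\big((k+2d+1)\binom{k+2d}{m}\big)^{-1}$), define $W$ as the vector of all $b$-split words around $h$, and then read off $C_\ell$ as a block matrix built from a Hankel matrix of reciprocal binomials whose positive semidefiniteness is supplied by Lemmas~\ref{lem:middPSD} and~\ref{lem:bo}. Your subset indexing $W_T$ is just a relabelling of the paper's $f_s$-blocks (grouped by $|T|=s$), and your binomial parameter $(k-\ell)+|T|+|S|$ is the paper's $\ell+2d-(r+s)$ via $\binom{k+2d}{m}=\binom{k+2d}{k+2d-m}$; in both cases the hypothesis $\ell\le k$ of Lemma~\ref{lem:middPSD} is met.
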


\begin{proof}
 Note that Equation \eqref{eq:simple} follows from Equation \eqref{eq:fullCycSum} by choosing $k=0$.
The polynomial $\sym(x-a_1, \ldots, x-a_k,x-b_1,\ldots,x-b_{2d}, h_1, h_2)$ is a sum of products which can be cyclically permuted so
that they are of the form
\begin{align}
\label{eq:cyclB}
(x-a_{\tau(1)}) &\cdots (x-a_{\tau(\ell)}) (x - b_{\sigma(1)}) \cdots (x -
b_{\sigma(m)})\ h_1\ (x-a_{\tau(\ell+1)}) \\
\notag
&\cdots (x-a_{\tau(k)}) (x -
b_{\sigma(m+1)})
\ldots (x -
b_{\sigma(2d)})\ h_2,
\end{align}
for some $0 \leq \ell \leq k$, $0 \leq m \leq 2d$, $\sigma \in S_{2d}$, $\tau
\in S_{k}$.
There are $2d + k +2$ cyclic permutations of (\ref{eq:cyclB}) within
$\sym(x-a_1, \ldots, x-a_k,x-b_1,
\ldots,
x-b_{2d}, h_1, h_2)$.
Further, the factors of the form $x - c$, with $c$ a scalar, can be commuted
with each other as long as
they don't pass over an $h_j$. Given a set of factors which
appear to the left of $h_1$, the other side of $h_1$ must consist of the
remaining unused factors. This means that up to cyclic permutation
each term is uniquely determined by which factors appear between $h_1$ and
$h_2$.  Therefore the polynomial
$\sym(x-a_1, \ldots, x-a_k,x-b_1,\ldots,x-b_{2d}, h_1, h_2)$,
up to cyclic equivalence,
contains a product of the form (\ref{eq:cyclB}) with coefficient
equal to
\begin{equation}
 \label{eq:coefficient}
 \frac{(\ell + m)!(2d +k - \ell - m)!(2d+k+2)}{(2d+k+2)!} =
\frac{1}{{{2d+k}\choose{\ell + m}}(2d+k+1)}.
\end{equation}

By cyclic permutation, we can express (\ref{eq:cyclB}) uniquely in the form
\begin{equation}
  \label{eq:groupBterms}
  (x - a_{\tau(1)}) \cdots (x - a_{\tau(\ell)})
  (x - b_{\phi(1)}) \cdots (x - b_{\phi(d-r)})
  h_1
  (x - b_{\phi(d-r+1)}) \cdots (x - b_{\phi(d)})
\end{equation}
\[
  (x - a_{\tau(\ell+1)})
   \cdots (x - a_{\tau(d)})
  (x - b_{d + \rho(1)}) \cdots (x - b_{d + \rho(s)})
  h_2
  (x - b_{d + \rho(s+1)}) \cdots (x - b_{d + \rho(d)})
\]
where $\phi, \rho \in S_d$.  Let $f_s(x, h, c_1, \ldots, c_d)$ be a
vector consisting of all unique polynomials in the nc variables
$x, h$ and
commuting variables $c = (c_1, \ldots, c_d)$ of the form
\[
  (x - c_{\rho(1)})\cdots(x-c_{\rho(s)})h(x - c_{\rho(s+1)}) \cdots
  (x - c_{\rho(d)})
\]
where $\rho \in S_d$.
Note that the length of $f_s$ is ${{d}\choose{s}}$.
Also, let $\bo_{r, s}$ denote the ${{d}\choose{r}} \times
{{d}\choose{s}}$ matrix all of whose entries are $1$.
Then for each $\ell, r, s$,
\begin{equation}
  \label{eq:FrFsSum}
  \frac{1}{\ell!(k-\ell)!} \sum_{\tau \in S_k}
  (x - a_{\tau(1)}) \cdots (x - a_{\tau(\ell)})
  f_r(x,h_1,b_1, \ldots, b_d)^T
\end{equation}
\[
  (x - a_{\tau(\ell+1)})
  \cdots (x- a_{\tau(d)})
  \bo_{r, s}
  f_s(x,h_2,b_{d+1}, \ldots, b_{2d})
\]
is equivalent to the sum of all distinct polynomials of the form
(\ref{eq:groupBterms}), each of which has coefficient
\begin{equation}
  \label{eq:coeffFrFsSum}
  \frac{1}{{{2d+k}\choose{\ell+2d-(r+s)}} (2d+k+1)}
\end{equation}
in $\sym(x-a_1, \ldots, x-a_k, x-b_1, \ldots, x-b_{2d}, h_1,
h_2)$.

Let
\[
  W(x, h, c_1, \ldots, c_d) = \begin{pmatrix}
    f_0(x, h, c_1, \ldots, c_d)
    \\ \vdots \\
    f_d(x, h, c_1, \ldots, c_d)
  \end{pmatrix},
\]
let
\[
  H_{\ell} = \frac{1}{(2d+k+1)\ell!(k-\ell)!}
  \begin{pmatrix}
    \frac{1}{{{2d+k}\choose{\ell}}} &
    \frac{1}{{{2d+k}\choose{\ell + 1}}} & \cdots
    & \frac{1}{{{2d+k}\choose{\ell + d}}}\\
    \frac{1}{{{2d+k}\choose{\ell + 1}}} &
    \frac{1}{{{2d+k}\choose{\ell + 2}}} & \cdots
    & \frac{1}{{{2d+k}\choose{\ell + d + 1}}}\\
    \vdots & \vdots & \cdots & \vdots \\
    \frac{1}{{{2d+k}\choose{\ell + d}}} &
    \frac{1}{{{2d+k}\choose{\ell + d + 1}}} & \cdots
    & \frac{1}{{{2d+k}\choose{\ell + 2d}}}
  \end{pmatrix},
\]
and define
\[
  C_{\ell} = ((H_{\ell})_{r,s} \bo_{r, s}).
\]
By Lemma \ref{lem:middPSD}, $H_{\ell}$ is positive semidefinite
and thus, by Lemma \ref{lem:bo}, $C_{\ell}$ is positive
semidefinite.
Further, when we sum the polynomials (\ref{eq:FrFsSum}) multiplied by
coefficients (\ref{eq:coeffFrFsSum}) over all $(\ell, r, s)$, we get (\ref{eq:fullCycSum}).
\end{proof}

\subsection{Connecting trace-convexity and trace-positivity}
A set $\ds \cD\subseteq \bigcup_n \bS_n^g=:\bS^g$
is called open if each $\cD(n):=\cD\cap\bS_n^g$ is open.

\begin{lemma}\label{lem:convex}
A symmetric nc polynomial $p\in\R\ax$ is trace-convex
on an open set $\ds\cD\subseteq \bS^g$
if and only if its Hessian $p^{\prime\prime}(x)[h]$ is trace-positive
on $\cD\times\bS^g$, 
i.e.,
\beq\label{eq:tr-pos}
\Tr p''(X)[H]\geq0
\eeq
for all $n\in\N$, $X\in\cD(n)$ and $H\in \bS_n^g$. 
\end{lemma}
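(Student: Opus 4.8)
The plan is to prove the equivalence by relating the trace-convexity inequality directly to a second-derivative condition via a one-variable Taylor-type argument. First I would fix $X \in \cD(n)$ and $H \in \bS_n^g$ and, for the ``only if'' direction, consider the scalar function $\varphi(t) := \Tr p(X + tH)$ for $t$ in a neighborhood of $0$ in $\R$; since $\cD(n)$ is open, $X + tH \in \cD(n)$ for all sufficiently small $|t|$. Trace-convexity of $p$ on $\cD$ applied to the pair $X+sH$ and $X+uH$ (for $s,u$ small) says exactly that $\varphi$ is midpoint-convex on a small interval around $0$; combined with continuity of $\varphi$ (it is a polynomial in $t$), this forces $\varphi$ to be convex there, hence $\varphi''(0) \geq 0$. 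The key computation is that $\varphi''(0) = \Tr p''(X)[H]$, which is immediate from the definition of the noncommutative directional derivative as $p''(X)[H] = \frac{d^2}{dt^2} p(X+tH)\big|_{t=0}$ together with linearity and continuity of the trace.

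For the ``if'' direction I would argue the reverse: assume \eqref{eq:tr-pos} holds on $\cD \times \bS^g$, fix $X, Y \in \cD(n)$, and set $Z_t := (1-t)X + tY$ for $t \in [0,1]$. If the whole segment $\{Z_t : t \in [0,1]\}$ lies in $\cD(n)$, then writing $\psi(t) := \Tr p(Z_t)$ we have $\psi''(t) = \Tr p''(Z_t)[Y-X] \geq 0$ by hypothesis (taking $H = Y-X$ and noting $Z_t$ ranges over points of $\cD(n)$), so $\psi$ is convex on $[0,1]$ and in particular $\psi(\tfrac12) \leq \tfrac12(\psi(0)+\psi(1))$, which is precisely \eqref{eq:1}. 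The subtlety is that an open set $\cD$ need not be convex, so the segment from $X$ to $Y$ need not lie in $\cD(n)$; but the trace-convexity inequality \eqref{eq:1} as stated is only required to hold for those $X,Y$ for which the midpoint $\tfrac{X+Y}{2}$ (and implicitly the geometry) makes sense in $\cD$ — I would interpret the definition so that \eqref{eq:1} is imposed only on pairs $X,Y$ such that the relevant segment lies in $\cD$, which is automatic for the matrix-interval domains $aI \prec X \prec bI$, $bI \prec X$, $X \prec aI$ used in Theorem \ref{thm:local}, since those are convex. (For a general open $\cD$ one restricts attention to the connected convex pieces, and the local-convexity-plus-continuity argument still upgrades midpoint convexity along each segment to the desired inequality.)

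The genuinely load-bearing step, and the one I expect to require the most care, is the identity $\frac{d^2}{dt^2}\Tr p(X+tH)\big|_{t=0} = \Tr p''(X)[H]$ together with the passage from midpoint convexity of a polynomial in $t$ to actual convexity (and thus nonnegativity of the second derivative). The derivative identity follows by expanding $p(X+tH)$ as a polynomial in $t$ with matrix coefficients, differentiating twice, and evaluating at $t=0$; the coefficient of $t^2$ in $p(X+tH) - p(X) - t\,p'(X)[H]$ is $\tfrac12 p''(X)[H]$ by the very definition of the Hessian recalled in the ``Derivatives'' subsection, and $\Tr$ commutes with differentiation since it is linear. The midpoint-to-convexity upgrade is standard: a measurable (a fortiori polynomial, hence continuous) midpoint-convex function on an interval is convex, and a $C^2$ convex function has nonnegative second derivative everywhere on that interval. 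Assembling these pieces in both directions yields the claimed equivalence.
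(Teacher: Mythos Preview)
Your argument is correct and follows essentially the same one-variable reduction as the paper: fix $X,H$, look at $t\mapsto \Tr p(X+tH)$, and pass between convexity of this scalar function and nonnegativity of its second derivative $\Tr p''(X)[H]$. The only cosmetic differences are that the paper works with the symmetric quantity $q(t)=\Tr\big(p(X+tH)+p(X-tH)-2p(X)\big)$ (so that $q(0)=0$ is an automatic local minimum, giving $q''(0)\ge 0$ directly) and, in the converse, invokes Taylor's theorem with Lagrange remainder rather than your ``$\psi''\ge 0$ on the segment $\Rightarrow$ $\psi$ convex'' step; both arguments carry the same content and both tacitly assume the relevant segment stays in $\cD$, which is exactly the issue you flagged and which is harmless for the convex domains used in Theorem~\ref{thm:local}.
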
  

\begin{proof}
  First suppose that $p$ is trace-convex on $\cD$.  
  In this case, given $n\in\N$, $X\in\cD(n)$ and
  $H\in \bS_n^g$, the polynomial of $t\in\mathbb R$,
\[
 q(t) = \Tr \big( p(X+tH) + p(X-tH) -2p(X)\big) 
\]
 takes nonnegative values (for small enough $t$) 
 and $q(0)=0$.  Hence, 
\[
 0\le \frac{d^2 q}{dt^2}(0) = \Tr p^{\prime\prime}(X)[H].
\]
 Thus $p^{\prime\prime}$ is trace-positive on $\cD\times\bS^g$.

 Now suppose $p^{\prime\prime}$ is trace-positive 
on $\cD\times\bS^g$ 
 and let $n\in\N$, $X\in\cD(n)$ and
  $H\in\bS_n^g$ be given such that $X\pm H\in\cD$. Consider the real polynomial 
\[
  r(t) = \Tr \big(p(X+tH)+p(X-tH)-2p(X)\big).
\]
  Observe that $r(0)=0,$ 
\[
  \frac{dr}{dt}(0) = \Tr\big( p^{\prime}(X)[H]-p^{\prime}(X)[H]\big) =0
\]
 and
\[
  \frac{d^2 r}{dt^2}(c) = \Tr\big( p^{\prime\prime}(X+cH)[H] +  p^{\prime\prime}(X-cH)[H] \big).
\]
  In particular, by hypothesis the second (ordinary) derivative of $r$ is  nonnegative.
 By  Taylor's  Theorem, there is a $1>c>0$ such that
\[
  \Tr\big( p(X+H)+p(X-H)-2p(X) \big)=r(1)=\frac12 \frac{d^2 r}{dt^2}(c) \ge 0
\]
 and thus  $p$ is trace-convex on $\cD$.
\end{proof}  

The parallel between trace-convexity and matrix-convexity stops here due
to the failure of a tracial version of Helton's sum of squares theorem \cite{Hel,McC}. That is,
a trace-positive nc polynomial is not necessarily a sum of hermitian squares and 
commutators \cite{KS}. 
For more on 
matrix-positive polynomials see e.g.~\cite{Hel, HM',PNA}, and for
trace-positive nc polynomials we refer to \cite{BK,BCKP} 
(see also \cite{CDT}) and the
references therein.

\subsection{Proof of the main results}

\begin{proof}[Proof of Theorem {\rm\ref{thm:global}}]
Suppose \eqref{it:i} holds. Then $\ds\frac{d^2 p}{dx^2}$ is
nonnegative on $\R$, which implies that it is a sum of polynomials of the form $A (x -
b_1)^2 \ldots (x - b_d)^2$ with $A \geq 0$. Lemma \ref{lem:psymm} and Proposition
\ref{lem:bsPSDMulti} now imply that $p''(x)[h]$ is a sum of squares plus commutators, i.e.,
\eqref{it:iii} holds.

If \eqref{it:iii} holds, that is, $p''(x)[h]$ is a sum of hermitian squares plus commutators,
then it is clear that $\operatorname{Tr}(p''(x)[h])$ is nonnegative (that is, $p^{\prime\prime}$ is trace positive), which
implies, by Lemma \ref{lem:convex}, that $\operatorname{Tr}(p(x))$ is convex, establishing \eqref{it:ii}. Finally,
that \eqref{it:ii} implies \eqref{it:i} is obvious.
\end{proof}

\begin{proof}[Proof of Theorem {\rm\ref{thm:local}}]
(1) If $p''(x)[h]$ is of the form \eqref{eq:local}, then it is clear that for $aI
\prec X \prec bI$, and all $H$, 
$\Tr p''(X)[H]$ is nonnegative.
(Note that $(X-aI)(bI-X)\succ 0$ since $X-aI$ and $bI-X$ commute.)
So, by Lemma \ref{lem:convex},  $\Tr p$ is convex on $aI\prec X\prec bI$ and $p$ is
convex on $(a,b)$.

Conversely, if $p$ is convex on $(a,b)$,
then $\ds\frac{d^2 p}{dx^2}(x)$ is nonnegative for $x \in (a,b)$, which implies that it is a
sum of polynomials each of one of the following forms \cite{PR}:
\begin{enumerate}[\rm(a)]
 \item $A(x - b_1)^2 \cdots (x-b_d)^2$
 \item $A(x-a)(x-b_1)^2 \cdots (x-b_d)^2$
 \item $A(b-x)(x-b_1)^2 \cdots (x-b_d)^2$
 \item $A(x-a)(b-x)(x-b_1)^2 \cdots (x-b_d)^2$
\end{enumerate}
where $A > 0$ and $d \geq 0$.
The result now follows from applying Lemma \ref{lem:psymm} and Proposition
\ref{lem:bsPSDMulti}.

The proofs of (2) and (3) are similar and left as an exercise for the reader.
\end{proof}

\section{Concluding remarks}\label{sec:finito}

\subsection{An alternative proof of the equivalence between trace-convexity and convexity}
Here we present an alternative proof (cf.~\cite[p.~74]{Gui09}) of the equivalence between trace-convexity and
convexity for univariate nc polynomials which avoids the sum of squares certificates. 
 The argument applies naturally to continuous functions $p:(a,b)\to\mathbb R$.

\begin{prop}Let $a, b \in [-\infty, \infty]$ with $a < b$.
If $p:(a,b) \to\mathbb R$ is continuous, then 
$\operatorname{Tr} p$ is convex
on $aI \prec X \prec bI$ if and only if 
$p$ is convex on $(a,b)$.
\end{prop}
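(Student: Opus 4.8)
The plan is to prove both implications directly, using only the classical Klein lemma (for the ``if'' direction) and an explicit bad-test-point construction (for the ``only if'' direction), so as to avoid the polynomial machinery of Section~\ref{sec:main}.

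First I would dispatch the easy direction: if $p$ is convex on $(a,b)$, then for any $X,Y$ with $aI\prec X,Y\prec bI$ the segment $\{(1-t)X+tY : t\in[0,1]\}$ again lies in $\{aI\prec Z\prec bI\}$ (this set is convex), so by the classical Klein lemma applied to the restriction $p|_{(a,b)}$ — or more concretely by diagonalizing and using convexity of $p$ scalar-wise together with the standard argument that $Z\mapsto\Tr p(Z)$ is convex whenever $p$ is — we get $\Tr p$ convex on $aI\prec X\prec bI$. Here I would be slightly careful that the usual statement of Klein's lemma is for $f:\R\to\R$; the restriction-to-an-interval version follows by noting that $\Tr p(Z)$ depends only on the eigenvalues of $Z$, all of which lie in $(a,b)$, so one may replace $p$ by any continuous convex extension to $\R$ without changing $\Tr p$ on the relevant domain.

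For the converse I would argue by contraposition: assume $p$ is not convex on $(a,b)$ and produce symmetric matrices violating the midpoint inequality \eqref{eq:1}. Since $p$ is continuous but not convex on the interval, there exist $\alpha,\beta\in(a,b)$ and $\lambda\in(0,1)$ with $p(\lambda\alpha+(1-\lambda)\beta) > \lambda p(\alpha)+(1-\lambda)p(\beta)$; by a standard reduction (iterating midpoints / using density of dyadic rationals and continuity) one may take $\lambda=\tfrac12$, so $p\big(\tfrac{\alpha+\beta}2\big) > \tfrac12\big(p(\alpha)+p(\beta)\big)$. Now take $g=1$, $n=2$, $X=\operatorname{diag}(\alpha,\beta)$ and $Y=\operatorname{diag}(\beta,\alpha)$. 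Then $aI\prec X,Y\prec bI$, while $\tfrac{X+Y}2 = \tfrac{\alpha+\beta}2 I_2$, so
\[
\Tr p\Big(\frac{X+Y}2\Big) = 2\,p\Big(\frac{\alpha+\beta}2\Big) > p(\alpha)+p(\beta) = \frac12\big(\Tr p(X)+\Tr p(Y)\big),
\]
contradicting trace-convexity on $aI\prec X\prec bI$. This shows $\Tr p$ convex there forces $p$ convex on $(a,b)$.

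The only genuine obstacle is the ``midpoint suffices'' reduction in the contrapositive step: from non-convexity one immediately gets a failure of the general convexity inequality for some $\lambda$, and one must upgrade it to a failure at $\lambda=\tfrac12$. This is the well-known fact that a midpoint-convex continuous function is convex (contrapositive: continuous and not convex $\Rightarrow$ not midpoint-convex); I would simply cite it, or sketch it via the dyadic-rational argument. Everything else is routine: the diagonal test matrices make the computation transparent, and the openness of $\{aI\prec X\prec bI\}$ guarantees the constructed $X,Y$ are legitimate interior points. I expect no difficulty extending the write-up to the endpoints $a=-\infty$ or $b=+\infty$, since the same diagonal construction works verbatim.
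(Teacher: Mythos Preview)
Your ``only if'' direction is correct but overcomplicated: the paper simply restricts to $n=1$, where $\Tr p = p$, so trace-convexity of $\Tr p$ on $aI\prec X\prec bI$ immediately gives (midpoint-)convexity of $p$ on $(a,b)$, and continuity upgrades this to full convexity. No $2\times 2$ matrices and no dyadic midpoint reduction are needed; the reduction you flag as ``the only genuine obstacle'' is in fact avoidable.

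For the ``if'' direction your route genuinely differs from the paper's. The paper gives a self-contained argument: diagonalize $tX+(1-t)Y = O^T\Lambda O$, write $\Lambda = t\,OXO^T+(1-t)\,OYO^T$, diagonalize $X$ and $Y$ separately, and observe that each diagonal entry $\lambda_i$ of $\Lambda$ is then a convex combination of the eigenvalues of $X$ and of $Y$ (the weights being squares of entries of orthogonal matrices, hence nonnegative and summing to $1$ along rows); apply scalar convexity of $p$ termwise and sum. You instead reduce to the global Klein lemma via a continuous convex extension of $p$ to all of $\R$. This is legitimate in spirit, but as written there is a gap: a continuous convex function on an open interval need not admit any continuous convex extension to $\R$ (take $p(x)=1/x$ on $(0,1)$). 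The fix is easy --- given $X,Y$ with $aI\prec X,Y\prec bI$, their spectra and that of every convex combination lie in some compact $[a',b']\subset(a,b)$; the restriction $p|_{[a',b']}$ is convex with finite one-sided derivatives at the endpoints and hence extends linearly to a continuous convex function on $\R$, after which the global Klein lemma applies. Your approach buys brevity if one is willing to cite Klein as a black box; the paper's direct diagonalization argument is self-contained and is in fact the standard proof of Klein's lemma, carried out uniformly in the interval setting.
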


\begin{proof}
If $\operatorname{Tr} p$ is convex on $(a,b),$ then for $1\times 1$ matrices
between $aI$ and $bI$,
the trace $\operatorname{Tr} p$ is equal to $p$ and hence $p$
is convex on $(a,b)$.

For the converse,
fix $t \in [0, 1]$ and $X, Y \in \bS_n$ for some
 $n\in\N$ such that $aI \prec X,Y \prec bI$. Then $t X + (1-t)
Y$ is a symmetric matrix and can be decomposed as $O^T \Lambda O$, where $O$ is
orthogonal and $\Lambda$ is diagonal
with entries ${\lambda}_1, \ldots, {\lambda}_n$. Further, $$(O^T \Lambda O)^n =
O^T {\Lambda}^n O \quad\text{ and }\quad \operatorname{Tr}O^T {\Lambda}^n O =
\operatorname{Tr}O O^T {\Lambda}^n = \operatorname{Tr} {\Lambda}^n.$$
Extending this to general polynomials,  we have
\[
\operatorname{Tr}p(O^T \Lambda O) = \operatorname{Tr}p(\Lambda) =
\sum_{i=1}^{n}  p({\lambda}_i).
\]

Conjugating by $O$ gives \[\Lambda = t O X O^T + (1-t) O Y O^T.\]
Since both $X$ and $Y$ are symmetric, they can be decomposed as $X = O^T_X
{\Lambda}_X O_X$ and $Y = O^T_Y {\Lambda}_Y O_Y$,
where $O_X$ and $O_Y$ are orthogonal and $\Lambda_X$ and $\Lambda_Y$ are
diagonal with entries
$\lambda^X_i$ and $\lambda_i^Y$, respectively.

Let $O^T O_X = A = (a_{ij})_{1 \leq i,j \leq n}$ and $O^T O_Y= B =
(b_{ij})_{1 \leq i,j \leq n}$, which are both orthogonal matrices.  The
$ii^{th}$ entry of the matrix $t\ O x O^T = t\ A^T {\Lambda}_X A$ is 
\[
(t\ A^T {\Lambda}_x A)_{ii} = \sum_{k=1}^{n} t\ a^2_{ik}{\lambda}^X_k. 
\]
 Similarly, the
$ii^{th}$ entry of $(1-t)\ B^T {\Lambda}_Y B$ is 
\[
((1-t)\ B^T {\Lambda}_y)_{ii} = \sum_{k=1}^{n} (1-t)\ b^2_{ik}{\lambda}^Y_k.
\]
Adding the two together gives
\[
{\lambda}_i = \sum_{k=1}^{n} t\ a^2_{ik}{\lambda}^X_k + \sum_{k=1}^{n} (1-t)\
b^2_{ik}{\lambda}^Y_k.
\]

Since $A$ and $B$ are orthogonal, $\displaystyle\sum_{k=1}^{n}
a^2_{ik} = \displaystyle\sum_{k=1}^{n} b^2_{ik} = 1$. Therefore
${\lambda}_i$ is a convex combination of the ${\lambda}^X_i$ and
${\lambda}^Y_i$ terms.  
Further, since $aI\prec X,Y\prec bI$, each of the $\lambda_i^X$ and
$\lambda_i^Y$ is in $(a,b)$.
Therefore, if $ p$ is convex on $(a,b)$, then 
\begin{align}
 \notag
  \operatorname{Tr} \big(tX +(1-t)Y\big)&=  \operatorname{Tr}p\big(t O X O^T + (1-t) O
Y O^T\big)\\
\notag
&= \sum_{i=1}^{n}
 p\left(\sum_{k=1}^{n} t a^2_{ik}{\lambda}^X_k + \sum_{k=1}^{n} (1-t)
b^2_{ik}{\lambda}^Y_k\right)\\
\notag
&\leq \sum_{i=1}^{n} \left(\sum_{k=1}^{n} t a^2_{ik}
p({\lambda}^X_k) +
\sum_{k=1}^{n} (1-t)b^2_{ik} p({\lambda}^Y_k)\right) \\
\notag
&=
\sum_{k=1}^{n}\left(t \Big(\sum_{i=1}^{n} a^2_{ik}\Big)  p({\lambda}^X_k) +
\Big(\sum_{i=1}^{n} b^2_{ik}\Big) p({\lambda}^Y_k)\right)\\
\notag
&= t \displaystyle\sum_{k=1}^{n}  p({\lambda}^X_k) + (1-t)
\displaystyle\sum_{k=1}^{n}  p({\lambda}^Y_k)\\
\notag
&= t \operatorname{Tr}p(X) +
(1-t) \operatorname{Tr}p(Y).
\end{align}
Hence  $\operatorname{Tr}p(X)$ is convex on $aI \prec X \prec bI$.
\end{proof}

\subsection{Algorithmic aspects}
There are now several computer algebra packages  available
capable of assisting work in free convexity and free real algebraic geometry. Namely

\ben[\rm(1)]
\item
\NCAlgebra \cite{HOSM} running under Mathematica;
\item 
\NCSOStools \cite{CKP} running under MATLAB.
\een
The former is more universal in that it implements
manipulation with noncommutative variables, including
nc rationals, and several algorithms pertaining to 
convexity. The latter is focused on free positivity,
sums of squares
and numerics. 

\begin{exa}
Here is a simple example computed with the aid of \NCSOStoolz.
We demonstrate our results on $p=15x^2 - 5x^4 + x^6$.
Note that $\ds\frac{d^2p}{dx^2}=(x-1)^2 (x+1)^2$, so $p$ is convex.
To compute a noncommutative trace-convexity certificate we proceed
as follows:
\begin{verbatim}
>> NCvars x;
>> p=15*x^2 - 5*x^4 + x^6;
>> [iscConvex,g,sohs,s] = NCisCycConvex(p,10e-10);
>> iscConvex
iscConvex =
     1
>> sohs
sohs = 
   5.47722558*h1-1.82574651*h1*x^2-1.82573252*x*h1*x-1.82574651*x^2*h1
                                     0.007151308*h1*x-0.007145828*x*h1
              -1e-09*h1*x^2+0.000279894*x*h1+2e-09*x*h1*x-1e-09*x^2*h1
                1.63298798*h1*x^2-0.204119565*x*h1*x-1.42870107*x^2*h1
                                 0.790214359*x*h1*x-0.790862166*x^2*h1
                                                    0.000187011*x^2*h1 
\end{verbatim}

As {\tt iscConvex}${}=1$, we believe $p$ is trace-convex.
To obtain an exact (symbolic) proof as opposed to the numerical
evidence presented above, we proceed as follows.
We try to manually find a sum of squares and commutators certificate of $p''(x)[h]$ as \NCSOStools then outputs more 
intermediate results
which we can analyze.

\begin{verbatim}
>> p2=NC2d(p);
>> [IsCycEq,X,base,sohs,g,SDP_data,L] = NCcycSos(p2);
>> X
X =
   30.0000   -0.0000    0.0000  -10.0000   -9.9999  -10.0000
   -0.0000    0.0001   -0.0001    0.0000    0.0000   -0.0000
    0.0000   -0.0001    0.0001   -0.0000    0.0000    0.0000
  -10.0000    0.0000   -0.0000    6.0000    3.0000    1.0003
   -9.9999    0.0000    0.0000    3.0000    3.9994    3.0000
  -10.0000   -0.0000    0.0000    1.0003    3.0000    6.0000
>> base
base = 
    'h1'
    'h1*x'
    'x*h1'
    'h1*x*x'
    'x*h1*x'
    'x*x*h1'
\end{verbatim}

Here {\tt X} is the Gram matrix corresponding to our sum of squares SDP (semidefinite program), and {\tt base} the corresponding border vector. 
The second and third row and column of {\tt X} are a direct
summand corresponding to a polynomial cyclically equivalent to 0.
Thus with
\[
X=\begin{pmatrix}
30 & -10 & -10 & -10 \\
-10 & 6 & 3 & 1\\
-10 & 3 & 4 & 3\\
-10 & 1 &3 & 6
\end{pmatrix},
\qquad
v=\begin{pmatrix}
h \\ hx^2 \\ xhx \\ x^2h
\end{pmatrix}.
\]
it is easy to verify 
\[
v^TXv \csim p''
\]
and that $X\succeq0$.
Factor $X = R^T R$, where, with 
$\ds
s=\frac1{\sqrt 6},\; t=\sqrt{\frac52}, \; u=\frac{\sqrt{30}}3,
$
\[
R=\begin{pmatrix}
0&0&0&0\\
0&s&-2s&s\\
0&-t&0&t\\
-3u&u&u&u
\end{pmatrix}.
\]
Letting $u=Rv$, the three nonzero entries of $u$,
\[
\begin{split}
q_1 &= s(hx^2-2xhx +x^2h) \\
q_2&=t(hx^2-x^2h) \\
q_3 &=u(3h-hx^2-xhx-x^2h)
\end{split}
\]
satisfy
\[
p''(x)[h] \csim q_1^*q_1+q_2^*q_2+q_3^*q_3.
\]
\end{exa}

\linespread{1.1} 

\end{document}